\newcommand{\R}{\mathbb R}
\newtheorem{theorem}{Theorem}[section]
\newtheorem{example}{Example}[section]
\newtheorem{definition}{Definition}[section]
\author{M.~Schneider, J.~Lang}
\title{Well-Balanced and Asymptotic Preserving \\ IMEX-Peer Methods}
\author{
	Moritz Schneider\\
	{\small \it Technical University of Darmstadt, Dolivostraße 15, 64293 Darmstadt, Germany}\\
	{\small moschneider@mathematik.tu-darmstadt.de} \\ \\
	Jens Lang\footnote{corresponding author}\\
	{\small \it Technical University of Darmstadt, Dolivostraße 15, 64293 Darmstadt, Germany}\\
	{\small lang@mathematik.tu-darmstadt.de}}
\begin{document}
	\maketitle

	\begin{abstract}
		Peer methods are a comprehensive class of time integrators offering numerous degrees of freedom in their coefficient matrices that can be used to ensure advantageous properties, e.g. A-stability or super-convergence. In this paper, we show that implicit-explicit (IMEX) Peer methods are   well-balanced and asymptotic preserving by construction without additional constraints on the coefficients. These properties are relevant when solving  (the space discretisation of) hyperbolic systems of balance laws, for example. Numerical examples confirm the theoretical results and illustrate the potential of IMEX-Peer methods.
		
	\end{abstract}
	
	$\;$
	
	\noindent {\bf Keywords}: implicit-explicit (IMEX) Peer methods; well-balanced methods;
	asymptotic preserving methods
	
	\section{Introduction}
	
	Implicit-explicit (IMEX) Peer methods are designed to efficiently solve large systems of differential equations (ODEs)
	\begin{equation} \label{PeerForm}
	u' = F_0(u) + F_1(u), \qquad u(0) = u_0 \in \R^m, \quad m \geq 1
	\end{equation}
	that arise in the modelling of various dynamical processes in engineering, physics, chemistry and other areas. Due to their special structure, IMEX-Peer methods treat the non-stiff part $F_0$ explicitly and the stiff contribution $F_1$ implicitly, thus combining the advantage of lower costs for explicit schemes with the favourable stability of implicit solvers to enhance the overall computational efficiency.
	
	Peer methods are two-step methods with $s$ internal stages and belong to the class of general linear methods that were introduced and  described in detail by Butcher \cite{Butcher}. A specific feature of Peer methods is that all stages in each time step have the same order of consistency and, hence, order reduction is avoided.
	
	There is a wide range of literature concerning the different aspects of Peer methods and we will only give  a short overview. More details can be found in the introductory chapters of \cite{SchneiderLangHundsdorfer,SchneiderLangWeiner}. Peer methods were introduced  by Schmitt and Weiner in 2004 \cite{SchmittWeiner2004}. The construction of IMEX-Peer methods via extrapolation has been applied by several authors \cite{CardoneJackiewiczSanduZhang2014a,LangHundsdorfer2017}. An alternative construction using partitioned methods is given in \cite{SoleimaniKnothWeiner2017, ZhangSanduBlaise2014}. Since the coefficient matrices of Peer methods offer many degrees of freedom, the construction of  super-convergent schemes \cite{ SchneiderLangHundsdorfer, SoleimaniWeiner2018,  WeinerSchmittPodhaiskyJebens2009}  and the adaptation  to variable step sizes \cite{SchneiderLangWeiner,SoleimaniWeiner2017a} is possible.
	
	Throughout this paper, we consider $s$-stage Peer methods of the form
	\begin{equation} \label{PeerMethod}
	w_{n+1} = Pw_{n} + \Delta t \hat{Q}F_0(w_{n}) + \Delta t \hat{R}F_0(w_{n+1}) + \Delta t Q F_1(w_{n}) + \Delta t R F_1(w_{n+1})
	\end{equation}
	with $\hat{Q} = Q + RS_1$ and $\hat{R} = RS_2$ as given in \cite{SchneiderLangHundsdorfer}.
	Here, $P, Q,R,S_1$ and $S_2$ are  $s \times s$ coefficient matrices. The matrix $R$ is taken to be lower triangular with constant diagonal $\gamma > 0$ and $S_2$ is strictly lower triangular. The approximations in each time step are denoted by
	\begin{equation} \label{StageValues}
	w_n = \begin{bmatrix} w_{n,1}^T, \dots, w_{n,s}^T \end{bmatrix}^T \in \R^{ s \cdot m}, \qquad w_{n,i} \approx u(t_n + c_i \Delta t),
	\end{equation}
	where $t_n = n \Delta t, n \geq 0$ and the nodes $c_1, \dots, c_s \in \R$, corresponding to the $s$ stages, are such that $c_i \neq c_j$ if $i \neq j$ and $c_s = 1$. The application of $F_i$ is meant component-wise, i.e. $F_i(w_n) = \begin{bmatrix}
	F_i(w_{n,1})^T,   \dots,   F_i(w_{n,s})^T
	\end{bmatrix}^T, \; i=0,1$.  For the sake of notation, we use for an $s \times s$ matrix $M$ the same  symbol for its Kronecker product with the $m \times m$ identity matrix $M \otimes I_m$ as a mapping from the space $\R^{s \cdot m}$ to itself.
	An extensive analysis of consistency and stability along with the construction of super-convergent methods as well as the adaption to variable step sizes is given in \cite{SchneiderLangHundsdorfer,SchneiderLangWeiner}.
	
	In this paper, we show that our recently developed super-convergent IMEX-Peer methods \cite{ SchneiderLangHundsdorfer} possess two additional properties that are important when dealing with hyperbolic balance and conservation laws: They are \emph{well-balanced} and \emph{asymptotic preserving}. We restrict the analysis to the setting of constant step sizes. However, the results   hold true for Peer methods applied with variable step sizes as well.
	
	For further investigation, we follow the approach of Boscarino and Pareschi \cite{BoscarinoPareschi} and consider the hyperbolic system of balance laws
	\begin{equation}\label{BalanceLaw}
	U_t + F(U)_x = G(U),
	\end{equation}
	where $U\in\R^N$ and $F, G : \R^N \to \R^N$. Usually, $F(U)$ gives the flux and $G(U)$ is the source.
	
	The remainder is organised as follows. In Section \ref{WellBalanced}, we show that IMEX-Peer methods are well-balanced   without  additional constraints on the coefficients. The same holds true for the asymptotic preservation property as analysed in Section \ref{AsymptoticPerserving}. Numerical experiments to illustrate the theoretical results are given in Section \ref{NumericalResults}.
	
	\break

	\section{Well-Balanced IMEX-Peer Methods} \label{WellBalanced}

	The steady-state $U^*$ of the hyperbolic system of balance laws \eqref{BalanceLaw} is characterized by
	\begin{equation} \label{SteadyState}
	U^*_t \equiv 0 \iff F(U^*)_x = G(U^*).
	\end{equation}
	Accordingly, a numerical scheme is called \emph{well-balanced}, if it preserves the steady-state solution $U^*$ as characterized in \eqref{SteadyState}. Since Peer methods are time integrators, we focus on the influence of  time discretisation on the well-balanced property of the numerical solution. Thus, we discretise \eqref{BalanceLaw} in space and obtain the  system of ODEs
	\begin{equation} \label{ODE}
	u'(t) = F_0(u(t)) + F_1(u(t))
	\end{equation}
	with non-stiff (flux) function $F_0$ and stiff source term $F_1$. Analogously to \eqref{SteadyState}, the steady-state $u^*$ is now described by
	\begin{equation*}
	{u^*}'(t) \equiv 0 \iff F_0(u^*) + F_1(u^*) = 0.
	\end{equation*}
	Assume that the numerical solution of  \eqref{ODE} yields an   approximation $v_n \approx u(t_n) \in \R^m$ satisfying
	\begin{equation*}
	F_0(v_n) + F_1(v_n) = 0.
	\end{equation*}
	Then, in some sense  $ v_n' = 0$ holds and, in order to  capture the steady-state,  we claim
	\begin{equation*}
	u(t_{n+1})\approx v_{n+1} = v_n.
	\end{equation*}
	This concept works well for one-step methods as discussed in \cite{BoscarinoPareschi}. Since we are dealing with two-step methods, a small modification is needed to take into account all values of the previous time step. This leads to the following definition of well-balanced IMEX-Peer methods.
	\begin{definition}\label{DefWellBalanced}
		An $s$-stage IMEX-Peer method \eqref{PeerMethod} is called \emph{well-balanced} if
		\begin{equation} \label{CheckWellBalanced}
		F_0(w_n) + F_1(w_n) =0
		\end{equation}
		implies $w_{n+1} = w_n$, where $w_{n,1} = \dots = w_{n,s}$.
	\end{definition}
	Now, we can prove the following theorem.
	\begin{theorem}\label{ThmWellBalanced}
		IMEX-Peer methods of the form \eqref{PeerMethod} with coefficient matrices that satisfy the standard consistency conditions from \cite{SchneiderLangHundsdorfer}
		\begin{equation*}
		Pe = e \qquad \textrm{and} \qquad S_1 = (I_s-S_2)V_0V_1^{-1},
		\end{equation*}
		where $V_0  = (c_i^{j-1})_{i,j}$ and $V_1   = ((c_i-1)^{j-1})_{i,j}$,
		are \emph{well-balanced}
		in the sense of Definition \ref{DefWellBalanced},
		given that \eqref{PeerMethod} has a unique solution $w_{n+1}$ for  $\Delta t$ sufficiently small.
	\end{theorem}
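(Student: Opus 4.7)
The plan is to verify that the constant stage vector $w_{n+1} = w_n = e \otimes v_n$ (with $v_n \in \R^m$ the common stage value) is a solution of the implicit system~\eqref{PeerMethod} under hypothesis~\eqref{CheckWellBalanced}, and then invoke the uniqueness assumption to conclude that this \emph{is} the stage vector produced by the method.

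First I would substitute the ansatz $w_{n+1} = w_n = e \otimes v_n$ into~\eqref{PeerMethod}. Because the functions $F_0, F_1$ act component-wise, we have $F_i(w_n) = e \otimes F_i(v_n)$ and likewise for $w_{n+1}$. Using $Pe = e$ and the Kronecker identities, the recursion~\eqref{PeerMethod} collapses to the algebraic condition
\begin{equation*}
  (\hQ + \hR)\,e \otimes F_0(v_n) \;+\; (Q + R)\,e \otimes F_1(v_n) \;=\; 0.
\end{equation*}
Exploiting the steady-state hypothesis $F_1(v_n) = -F_0(v_n)$, this reduces further to
\begin{equation*}
  \bigl[\,(\hQ + \hR)\,e \;-\; (Q+R)\,e\,\bigr] \otimes F_0(v_n) \;=\; 0,
\end{equation*}
so the task becomes showing the purely algebraic identity $(\hQ + \hR)\,e = (Q+R)\,e$.

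Recalling $\hQ = Q + RS_1$ and $\hR = RS_2$, this identity is equivalent to $R(S_1 + S_2)e = Re$, for which it suffices to prove $(S_1 + S_2)\,e = e$. Here is where the consistency condition $S_1 = (I_s - S_2)V_0V_1^{-1}$ enters. The main observation is that the first columns of both Vandermonde-like matrices $V_0 = (c_i^{j-1})_{i,j}$ and $V_1 = ((c_i-1)^{j-1})_{i,j}$ are the all-ones vector $e$, so $V_1^{-1} e = e_1$ (the first canonical basis vector) and consequently $V_0 V_1^{-1} e = V_0 e_1 = e$. Substituting yields $(S_1 + S_2)e = (I_s - S_2)\,e + S_2\,e = e$, which closes the computation.

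Finally, having established that the constant vector $w_n$ itself satisfies the implicit relation~\eqref{PeerMethod}, the hypothesis that \eqref{PeerMethod} admits a unique solution $w_{n+1}$ for $\Delta t$ small enough forces $w_{n+1} = w_n$, proving the well-balanced property. I expect no serious obstacle: the only non-routine step is spotting the identity $V_0 V_1^{-1} e = e$, which is immediate from the structure of the first columns of $V_0$ and $V_1$; everything else is a direct Kronecker-product manipulation combined with $Pe = e$.
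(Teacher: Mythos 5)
Your proposal is correct and follows essentially the same route as the paper: substitute the constant ansatz $w_{n+1}=w_n=e\otimes v_n$, use $Pe=e$ and the steady-state relation $F_1(v_n)=-F_0(v_n)$ to reduce everything to the algebraic identity $R(S_1+S_2-I_s)e=0$, which follows from $V_1^{-1}e=e_1$ and $V_0e_1=e$, and then invoke the assumed uniqueness of $w_{n+1}$. The only cosmetic difference is that the paper factors the coefficient as $R(I_s-S_2)(V_0V_1^{-1}-I_s)$ before applying it to $e$, whereas you compute $(S_1+S_2)e=e$ directly; these are the same calculation.
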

	\begin{proof}
		By Definition \ref{DefWellBalanced}, we have $F_0(w_n) + F_1(w_n) = 0$ with $w_n = e \otimes w_{n,s}$ and $F_i(w_n) = e \otimes F_i(w_{n,s}), \; i = 0,1,$ 	where $e = (1, \dots, 1)^T \in \R^s.$
		Under the hypotheses of Theorem \ref{ThmWellBalanced} stated above, we prove that  \eqref{PeerMethod} implies $w_{n+1} = w_n$:
		\begin{align*}
		w_{n+1} & =Pw_n + \Delta t (\hat{Q}F_0(w_n) + Q F_1(w_n)) + \Delta t (\hat{R}F_0(w_{n+1}) + R F_1(w_{n+1}) )  \\
		& = (P \otimes I_m)(e \otimes w_{n,s}) + \Delta t  ( (R(S_1 + S_2 - I_s)) \otimes I_m)   (e \otimes F_0(w_{n,s}) ).
		\end{align*}
		Using $R(S_1 + S_2 - I_s) = R(I_s-S_2)(V_0V_1^{-1}-I_s)$ and $V_0V_1^{-1}e = V_0e_1 = e,$ where $e_1 = (1,0, \dots, 0)^T \in \R^s$, the second term vanishes and we obtain $w_{n+1} = w_n$.
	\end{proof}
	
	In practice, we cannot expect  \eqref{CheckWellBalanced} to hold for all stage values of $w_n$ but rather that when the numerical solution converges to the steady-state, we will reach a point in time when all stage values are sufficiently similar and the last stage of the time step satisfies
	$  F_0(w_{n,s}) + F_1(w_{n,s}) = 0$.
	Then, the numerical scheme should reproduce the steady-state. It can be shown that if
	\begin{equation*}
	F_0(w_{n,s}) + F_1(w_{n,s}) = 0 \quad \textrm{ and } \quad w_{n,i} = w_{n,s} + \mathcal{O}(\varepsilon), \quad i = 1, \dots, s-1,
	\end{equation*}
	we obtain for continuous $F_0$ and $F_1$, analogously to the proof of Theorem \ref{ThmWellBalanced},
	\begin{equation*}
	w_{n+1} = w_n + \mathcal{O}(\varepsilon).
	\end{equation*}
	Hence, the well-balanced property is beneficial in practical applications even if the strong condition \eqref{CheckWellBalanced} is not fulfilled exactly.

	\section{Asymptotic Preserving IMEX-Peer Methods} \label{AsymptoticPerserving}
	
	We investigate the behaviour of IMEX-Peer methods when the hyperbolic balance laws \eqref{BalanceLaw} are scaled with a parameter $\varepsilon >0$. This is discussed in detail by Chen, Levermore and Liu \cite{ChenLevermoreLiu1994} and has been adopted for  IMEX Runge-Kutta methods by Boscarino and Pareschi   and for  multistep methods by Dimarco and Pareschi \cite{BoscarinoPareschi, DimarcoPareschi}.
	
	Scaling the space and time variables in \eqref{BalanceLaw} with a parameter $\varepsilon >0$ leads to
	\begin{equation} \label{ScaledConsLaws}
	U^\varepsilon_t + F(U^\varepsilon)_x = \frac{1}{\varepsilon}G(U^\varepsilon).
	\end{equation}
	We are interested in the performance of numerical schemes that solve \eqref{ScaledConsLaws} for $\varepsilon \to 0$.
	Taking the limit $\varepsilon \to 0$ analytically yields the system of algebraic equations
	\begin{equation} \label{SourceEq}
	G(U^0) = 0.
	\end{equation}
	Following  the analysis in \cite{BoscarinoPareschi, ChenLevermoreLiu1994}, we assume that $G(U)$ with $U \in \R^N$ is a dissipative relaxation operator, i.e., there exists an $M \times N$ matrix $C$ with $\operatorname{rank}(C) = M < N$ and
	\begin{equation} \label{Qtilde}
	CG(U) = 0 \qquad \textrm{ for all } U \in \R^N.
	\end{equation}
	We set $u = CU \in \R^M$ to be the  vector of conservation quantities.
	Further, each such $u$ uniquely defines a local equilibrium value
	\begin{equation} \label{locEqVal}
	U = E(u)
	\end{equation}
	that satisfies
	\begin{equation}\label{CondGEu}
	0= G(E(u)) = G(U)
	\end{equation} and $u = CE(u) = CU.$
	
	Therefore, for every solution $U^0$ of \eqref{SourceEq}, we find a uniquely determined vector of conserved quantities $u^0$ such that $U^0 = E(u^0).$
	Going back to \eqref{ScaledConsLaws} and multiplying with $C$, we obtain
	\begin{equation*}
	CU^\varepsilon_t + CF(U^\varepsilon)_x = \frac{1}{\varepsilon}CG(U^\varepsilon)
	\end{equation*}
	and, hence, for $\varepsilon \to 0$,   a system of $M$ conservation laws
	\begin{equation*}
	(CU^0)_t + (CF(U^0))_x = 0.
	\end{equation*}
	Using the equilibrium approximation $U^0 = E(u^0)$ with $CE(u^0) = CU^0 = u^0$ from above, we have
	\begin{equation*}
	(CE(u^0))_t + (CF(E(u^0)))_x = 0
	\end{equation*}
	and, finally, obtain the typical system of conservation laws
	\begin{equation} \label{EquilibSyst}
	u^0_t + f(u^0)_x = 0
	\end{equation}
	with $f(\cdot)  = CF(E(\cdot)).$
	
	In the following, we verify that IMEX-Peer methods, as defined in \eqref{PeerMethod} and \eqref{StageValues},  capture the asymptotic behaviour described  above. To this end, we write  \eqref{ScaledConsLaws} in the standard form for IMEX-Peer methods \eqref{PeerForm} and define
	\begin{equation}\label{Ident}
	F_0(U^\varepsilon)    = -F(U^\varepsilon)_x  \qquad \textrm{and} \qquad
	F_1(U^\varepsilon)  = \frac{1}{\varepsilon} G(U^\varepsilon),
	\end{equation}
	where we identify  $U^\varepsilon, F(U^\varepsilon)_x$ and $G(U^\varepsilon)$ with the corresponding spatial discretisations for the sake of enhanced readability.
	Applying an IMEX-Peer method \eqref{PeerMethod} to \eqref{ScaledConsLaws} and \eqref{Ident} gives
	\begin{equation}\label{IMEXapplied}
	U^\varepsilon_{n+1} = PU^\varepsilon_{n} - \Delta t \hat{Q}F(U^\varepsilon_{n})_x - \Delta t \hat{R}F(U^\varepsilon_{n+1})_x + \frac{\Delta t}{\varepsilon} Q G(U^\varepsilon_{n}) + \frac{\Delta t}{\varepsilon} R  G(U^\varepsilon_{n+1}).
	\end{equation}
	As in the continuous case,   $\varepsilon \to 0$ yields
	\begin{equation} \label{IMEXlim0}
	QG(U^0_{n}) + RG(U^0_{n+1}) = 0.
	\end{equation}
	At this point, we are faced with a typical problem occurring for multi-step methods: We have to introduce an additional condition on the values of the previous time step $U^\varepsilon_{n}$. This is reasonable since the contribution of $U^\varepsilon_{n}$ via $QG(U^\varepsilon_{n})$ depends on the specific choice of $G(\cdot)$ and cannot be compensated by $RG(U^\varepsilon_{n+1})$ independently of $G(\cdot)$. Hence, we claim \emph{well-prepared} initial values \cite{BoscarinoPareschi,FilbertJin}.
	
	\begin{definition} \label{WellPrepared}
		The initial data $U^\varepsilon_{n}$ for \eqref{IMEXapplied} is said to be \emph{well-prepared} if
		\begin{equation*}
		U^\varepsilon_{n} = E(u^\varepsilon_{n}) + \mathcal{O}(\varepsilon).
		\end{equation*}
	\end{definition}
	This allows us to formulate the following.
	
	\begin{theorem}
		Assume the initial data is well-prepared. Then, in the limit $\varepsilon \to 0$, an IMEX-Peer method \eqref{PeerMethod} applied to \eqref{ScaledConsLaws} becomes the explicit Peer method $(F_1 \equiv 0)$ applied to the equilibrium system \eqref{EquilibSyst}.	
	\end{theorem}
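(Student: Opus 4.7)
The plan is to separate the update \eqref{IMEXapplied} into two independent pieces of information: the behaviour of the stiff source $G$ in the limit $\varepsilon\to 0$, and the evolution of the conserved quantities $u=CU$. Both threads are then joined through the well-preparedness of $U^\varepsilon_n$.

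First I would multiply \eqref{IMEXapplied} by $\varepsilon$ and pass to the limit $\varepsilon\to 0$; every term not carrying a $1/\varepsilon$ drops out, and what remains is precisely $QG(U^0_n)+RG(U^0_{n+1})=0$, i.e. \eqref{IMEXlim0}. The well-prepared assumption (Definition \ref{WellPrepared}), together with continuity of $G$ and the identity $G(E(u))=0$ from \eqref{CondGEu}, forces $G(U^0_n)=0$, so that $RG(U^0_{n+1})=0$. Since $R$ is lower triangular with constant nonzero diagonal $\gamma>0$, it is invertible and hence $G(U^0_{n+1})=0$. The local equilibrium characterisation \eqref{locEqVal} then supplies a unique $u^0_{n+1}$ with $U^0_{n+1}=E(u^0_{n+1})$, so the next Peer iterate also lies on the equilibrium manifold in the limit.

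Second, I would apply the conservation operator $C$ directly to \eqref{IMEXapplied}. Because $C$ acts componentwise on the physical variable while the stage matrices $P,\hat{Q},\hat{R},Q,R$ act via Kronecker products on the stage index, the two commute; furthermore \eqref{Qtilde} gives $CG(U^\varepsilon_\bullet)=0$, so the $1/\varepsilon$ contributions vanish \emph{identically} for every $\varepsilon>0$. Setting $u^\varepsilon_n:=CU^\varepsilon_n$ and using that constant-coefficient spatial differentiation commutes with $C$, this produces the reduced recursion
\begin{equation*}
u^\varepsilon_{n+1} = P u^\varepsilon_n - \Delta t\, \hat{Q}\, CF(U^\varepsilon_n)_x - \Delta t\, \hat{R}\, CF(U^\varepsilon_{n+1})_x,
\end{equation*}
which is already a purely explicit Peer update for $u^\varepsilon$: the IMEX-Peer structure enforces cancellation of the stiff terms at the level of the conserved quantities.

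Finally, I would pass to the limit $\varepsilon\to 0$ in this reduced recursion. Combining $U^0_n=E(u^0_n)$ (well-prepared data) and $U^0_{n+1}=E(u^0_{n+1})$ (from the first step) with the definition $f(\cdot)=CF(E(\cdot))$ from below \eqref{EquilibSyst} yields
\begin{equation*}
u^0_{n+1} = P u^0_n - \Delta t\, \hat{Q}\, f(u^0_n)_x - \Delta t\, \hat{R}\, f(u^0_{n+1})_x,
\end{equation*}
which is exactly the Peer method \eqref{PeerMethod} with $F_0(u)=-f(u)_x$ and $F_1\equiv 0$ applied to the equilibrium system \eqref{EquilibSyst}. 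The main obstacle I anticipate is the first paragraph: one has to justify that the implicit Peer equation \eqref{IMEXapplied} admits a well-defined limiting iterate $U^0_{n+1}$ as $\varepsilon\to 0$ in the first place; once this is in hand, invertibility of $R$ propagates the equilibrium manifold forward in time and the remaining steps are purely algebraic.
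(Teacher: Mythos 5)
Your proposal is correct and follows essentially the same route as the paper's proof: derive $QG(U^0_n)+RG(U^0_{n+1})=0$, use well-preparedness and the regularity of $R$ to conclude $G(U^0_{n+1})=0$, then multiply the scheme by $C$ so that \eqref{Qtilde} annihilates the stiff terms and the limit yields the explicit Peer method on \eqref{EquilibSyst}. Your closing remark about justifying the existence of the limiting iterate $U^0_{n+1}$ is a fair observation, but the paper glosses over this point as well.
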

	
	\begin{proof}
		
		Since the initial values  are well-prepared, we obtain for the limit $\varepsilon \to 0$
		\begin{equation*}
		RG(U^0_{n+1}) = 0 \implies G(U^0_{n+1}) = 0
		\end{equation*}
		from \eqref{CondGEu} and  \eqref{IMEXlim0} since $R$ is regular.
		Analogously to the  continuous case, we define
		\begin{equation*}
		u^0_{n+1} = (I_s \otimes C)U^0_{n+1} = \begin{bmatrix}
		(CU^0_{n+1,1})^T, \dots, (CU^0_{n+1,s})^T
		\end{bmatrix}^T.
		\end{equation*}
		We set $C \bullet U = (I_s \otimes C)U$ for any $U\in \R^{N\cdot s}$.
		As in the continuous case \eqref{locEqVal}, the local equilibrium values $U^0_{n+1}$ are defined by $u^0_{n+1}$ via
		\begin{equation*}
		U^0_{n+1} = E(u^0_{n+1})
		\end{equation*}
		where $G(E(u^0_{n+1})) = 0$ and $C \bullet E(u^0_{n+1}) = u^0_{n+1}$.
		
		Multiplying \eqref{IMEXapplied} with $C$ and using that for any  $M \in \R^{s\times s}$
		\begin{align*}
		C \bullet (M \otimes I_N) \begin{bmatrix}
		U_{n,1}^{\varepsilon T},  \dots, U_{n,s}^{\varepsilon T}
		\end{bmatrix}^T = (M \otimes I_M)\left(C \bullet \begin{bmatrix}
		U_{n,1}^{\varepsilon T},  \dots,  U_{n,s}^{\varepsilon T}
		\end{bmatrix}^T\right),
		\end{align*}
		as well as the notation $MU = (M \otimes I_N)U$ for any $U \in \R^{N\cdot s}$, we find
		\begin{align} \label{QAplPeer}
		C\bullet U^\varepsilon_{n+1} & =   P (C \bullet  U^\varepsilon_{n})  - \Delta t \hat{Q}\left(C \bullet F(U^\varepsilon_{n})_x\right)- \Delta t  \hat{R} \left(C \bullet F(U^\varepsilon_{n+1})_x\right) \nonumber \\
		& +  \frac{\Delta t}{\varepsilon}  Q \left(  C\bullet  G(U^\varepsilon_{n})\right) +  \frac{\Delta t}{\varepsilon}   R \left( C \bullet G(U^\varepsilon_{n+1})\right).
		\end{align}
		We recall from \eqref{Qtilde} that $C \bullet G(U)  = 0$ for all $U \in \R^{N\cdot s}$. Hence, \eqref{QAplPeer} reduces to
		\begin{equation*}
		C\bullet U^\varepsilon_{n+1}   =   P (C \bullet U^\varepsilon_{n})  - \Delta t \hat{Q}\left(C \bullet F(U^\varepsilon_{n})_x\right) - \Delta t  \hat{R}\left( C \bullet F(U^\varepsilon_{n+1})_x\right).
		\end{equation*}
		For $\varepsilon \to 0$, we replace $U^\varepsilon_n$ by $E(u^0_n)$ and use $C \bullet E(u^0_n) = u^0_n$ to obtain
		\begin{equation} \label{IMEXlim}
		u^0_{n+1}   =   P u^0_{n}  - \Delta t \hat{Q}(C\bullet F(E(u^0_{n}))_x) - \Delta t  \hat{R}(C \bullet F(E(u^0_{n+1}))_x).
		\end{equation}
		Setting $f(\cdot) = C \bullet F(E(\cdot)),
		$
		we observe that \eqref{IMEXlim} coincides with the application of IMEX-Peer method \eqref{PeerMethod} to the system of conservation laws
		\eqref{EquilibSyst}
		\begin{equation} \label{Expl}
		u^0_t = -f(u^0)_x,
		\end{equation}
		where   $F_0(u^0) = - f(u^0)_x$ and $F_1 \equiv 0$, giving an explicit scheme.
	\end{proof}
	We see that instead of solving \eqref{Expl} explicitly, we can equivalently apply an IMEX-Peer method to  the relaxed system \eqref{ScaledConsLaws}, thus profiting from the stabilisation of the implicit part.
	
	\section{Numerical Examples} \label{NumericalResults}
	
	We present two numerical examples to illustrate that IMEX-Peer methods are well-balanced and asymptotic preserving. Since we focus on time integration, we consider systems of ODEs where no spatial discretisation is necessary and apply our recently developed super-convergent methods from \cite{SchneiderLangHundsdorfer}.
	
	\begin{example}[Well-balanced Property]
		We demonstrate the effect of the well-balanced property using a  system of ODEs of form \eqref{PeerForm} with non-stiff part $F_0(u) = [u_2, -u_1]^T$ and stiff contribution $ F_1(u) =   [0,  1 - u_2]^T$ as introduced by Boscarino and Pareschi in \cite{BoscarinoPareschi}. The unique equilibrium point is $u^* = [1,0]^T$.
		
		In Figure \ref{WellFig}, the behaviour  of the numerical approximation of the solution components $u_1$ and $u_2$ for $t \in [0,15]$ is given using IMEX-Peer methods from \cite{SchneiderLangHundsdorfer}. Starting with $u_0 = [0,1]^T$, we observe that the IMEX-Peer methods reach the equilibrium after a short time even for a large step size $\Delta t = 1$. Boscarino and Pareschi demonstrate in \cite{BoscarinoPareschi} that this is usually not the case if the time integrator is not well-balanced.
	\end{example}
	
	\begin{example}[Asymptotic Preservation Property] To verify that our super-convergent IMEX-Peer methods developed in \cite{SchneiderLangHundsdorfer} are asymptotic preserving, we consider a stiff system of ODEs  \eqref{PeerForm}
		where $F_0(u) = [-u_2, u_1]^T$ and $\displaystyle F_1(u) =  \frac1\varepsilon [0,  \sin u_1 - u_2]^T$ with scaling parameter $\varepsilon > 0$, initial values $u(0) = [\pi/2,1]^T$ and $t \in [0,5]$ as given by Pareschi and Russo in \cite{PareschiRusso}.
		
		Numerical results for $\Delta t = 0.2 \cdot 2^{-i}, i = 0, \dots, 4$ and  $\varepsilon = 1, 10^{-5}$ are given in Figure \ref{AsymFig}. The error is computed using the scaled maximum error norm over all time steps $err = \max_{0\leq t_n\leq 5} \max_{i=1,2} |U_i - u_i|/(1 + |u_i|)$, where $U$ is the approximate solution and $u$ is a reference solution computed using the \textsc{Matlab} routine \textsc{ODE15s}.
		
		We observe that the orders of convergence  of the super-convergent methods IMEX-Peer2s, IMEX-Peer3s and IMEX-Peer4s with stage number $s = 2,3,4$ from \cite{SchneiderLangHundsdorfer}, derived using a least squares fit, are 2.9, 3.9, 5.2 for $\varepsilon =1$ and 3.0, 4.0, 4.8 for $\varepsilon = 10^{-5}$. Hence, they match   the theoretical orders $s+1$ nicely.
		
		We remark that the order of convergence is affected in an intermediate region $\Delta t = \mathcal{O}(\varepsilon)$, see \cite{DimarcoPareschi,PareschiRusso} for further details. Nevertheless, the order of convergence  is fully  restored when $\Delta t$ leaves the regime $\mathcal{O}(\varepsilon)$, therefore, the drawback is negligible \cite{PareschiRusso}.
	\end{example}
	
	\begin{figure}
		\centering
		\subfigure[Results for the well-balanced test. All meth- $\qquad$ ods   capture the equilibrium after short   time  and $\qquad$  $\qquad$  for a large time step $\Delta t = 1$.]{\label{WellFig}	\includegraphics[scale=.52]{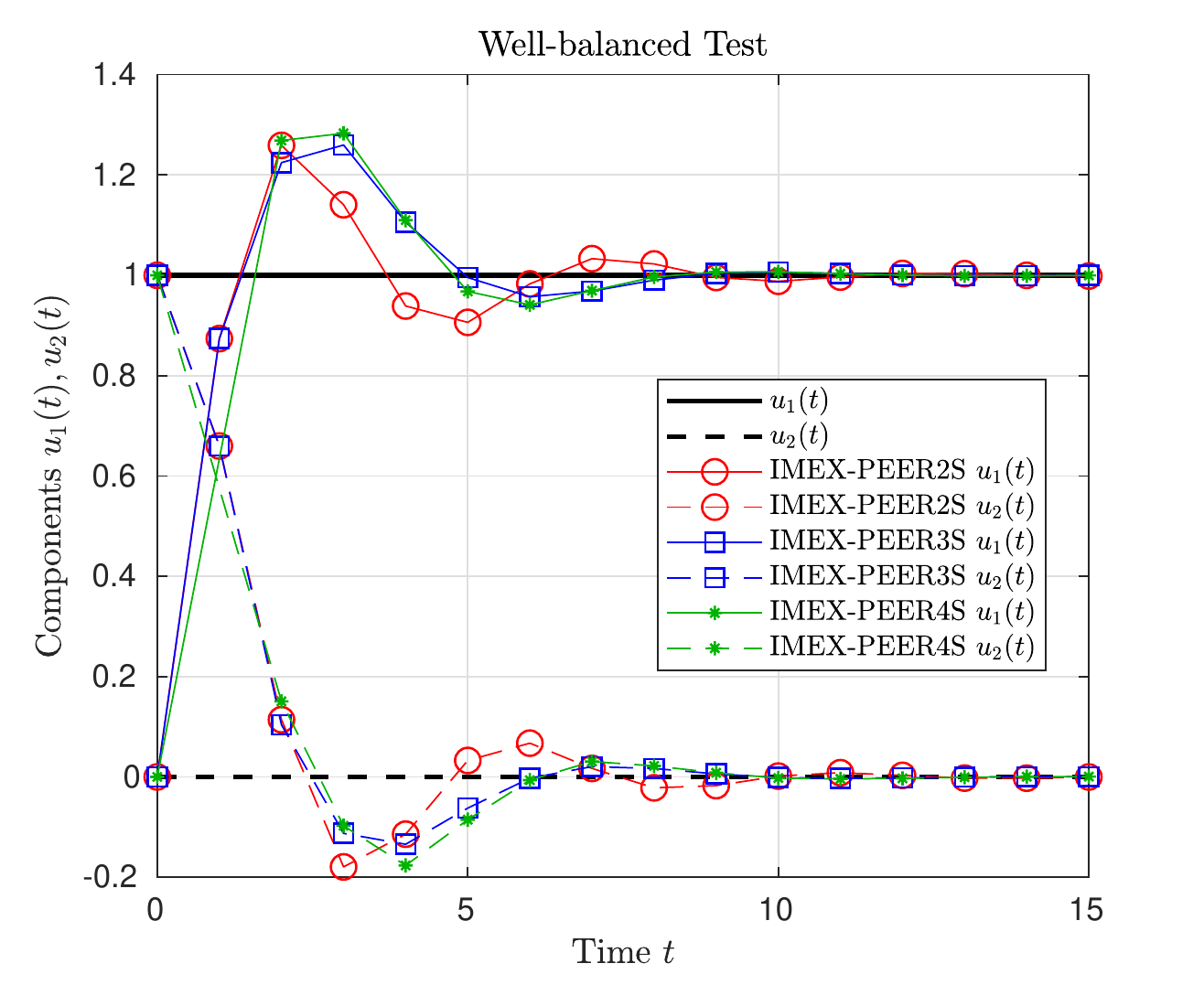} }
		\subfigure[Results for the asymptotic preservation test. The super-convergent  methods keep their  order of $s+1$ for various choices of $\varepsilon$.]{\label{AsymFig} \includegraphics[scale=.5]{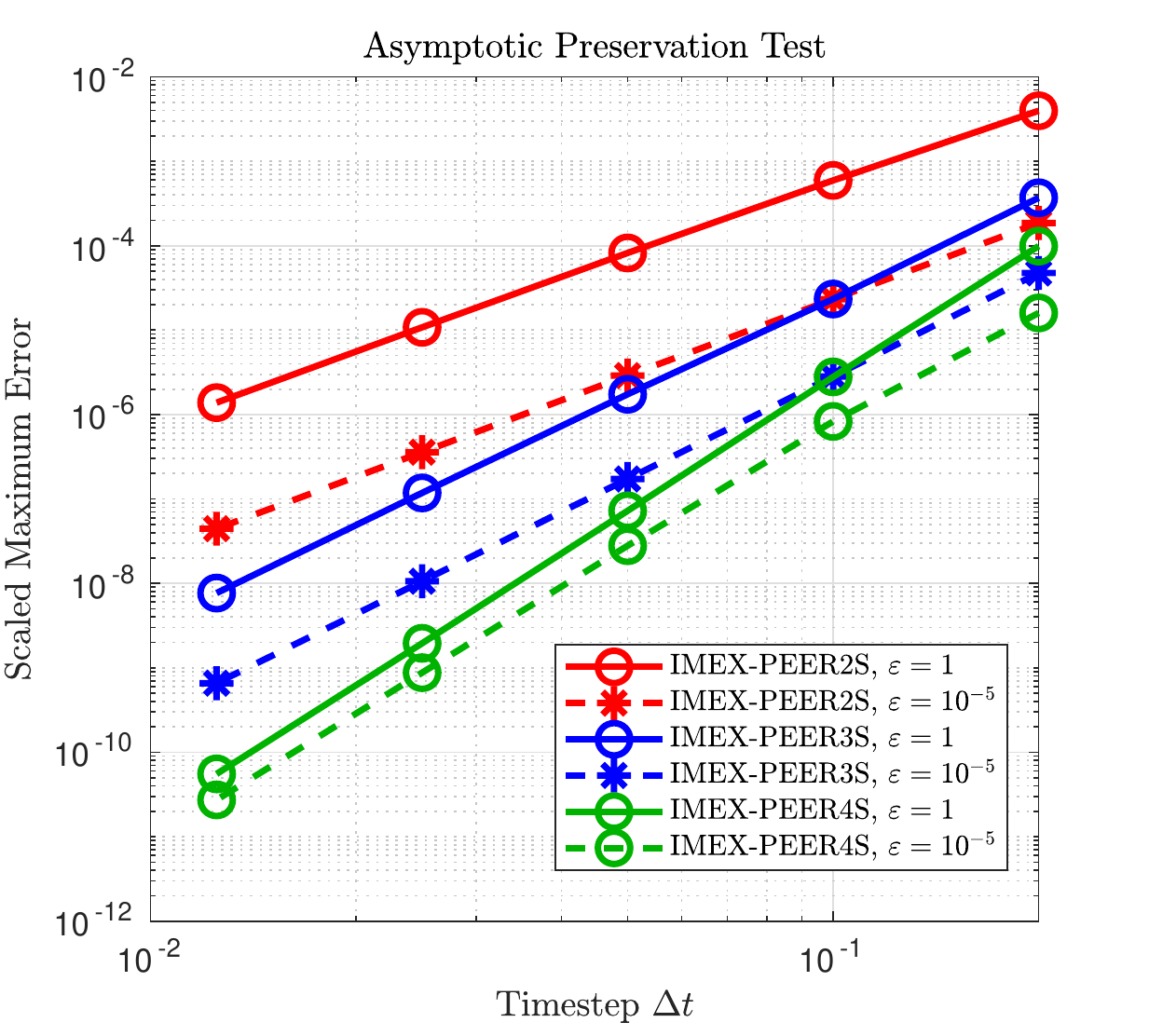} }
		\caption{Numerical results for  super-convergent and A-stable $s$-stage IMEX-Peer methods IMEX-Peer2s, IMEX-Peer3s, and IMEX-Peer4s from \cite{SchneiderLangHundsdorfer}. }
	\end{figure}
	
	In conclusion, we have shown that IMEX-Peer methods are well-balanced and asymptotic preserving by construction and, hence, suitable for the preservation of steady-states and the capture of asymptotic limits for space-time scaling.

	\section{Acknowledgement}
		J. Lang was supported by the
		German Research Foundation within the collaborative research center
		TRR154 ``Mathematical Modeling, Simulation and Optimisation Using
		the Example of Gas Networks'' (DFG-SFB TRR154/2-2018, TP B01) and
		the Graduate Schools Computational Engineering (DFG GSC233)
		and Energy Science and Engineering (DFG GSC1070).

		\bibliographystyle{plain}
		\bibliography{bibimexpeerwellbalanced}

\begin{thebibliography}{10}

\bibitem{BoscarinoPareschi}
S.~Boscarino and L.~Pareschi.
\newblock On the asymptotic properties of {IMEX} {R}unge--{K}utta schemes for
  hyperbolic balance laws.
\newblock {\em J. Comput. Appl. Math.}, 316:60--73, 2017.

\bibitem{Butcher}
J.C. Butcher.
\newblock General linear methods.
\newblock {\em Acta Numerica}, 15:157--256, 2006.

\bibitem{CardoneJackiewiczSanduZhang2014a}
A.~Cardone, Z.~Jackiewicz, A.~Sandu, and H.~Zhang.
\newblock Extrapolation-based implicit-explicit general linear methods.
\newblock {\em Numer. Algorithms}, 65:377--399, 2014.

\bibitem{ChenLevermoreLiu1994}
G.~Chen, C.~Levermore, and T.~Liu.
\newblock Hyperbolic {C}onservation {L}aws with {S}tiff {R}elaxation {T}erms
  and {E}ntropy.
\newblock {\em Commun. Pure Appl. Math.}, 47:787--830, 1994.

\bibitem{DimarcoPareschi}
G.~Dimarco and L.~Pareschi.
\newblock Implicit-explicit linear multistep methods for stiff kinetic
  equations.
\newblock {\em SIAM J. Num. Anal.}, 55:664--690, 2017.

\bibitem{FilbertJin}
F.~Filbert and S.~Jin.
\newblock A class of asymptotic-preserving schemes for kinetic equations and
  related problems with stiff sources.
\newblock {\em J. Comp. Phys.}, 229:7625--7648, 2010.

\bibitem{LangHundsdorfer2017}
J.~Lang and W.~Hundsdorfer.
\newblock Extrapolation-based implicit-explicit {P}eer methods with optimised
  stability regions.
\newblock {\em J. Comp. Phys.}, 337:203--215, 2017.

\bibitem{SchmittWeiner2004}
B.A. Schmitt and R.~Weiner.
\newblock Parallel two-step {W}-methods with peer variables.
\newblock {\em SIAM J. Numer. Anal.}, 42(1):265--282, 2004.

\bibitem{SchneiderLangHundsdorfer}
M.~Schneider, J.~Lang, and W.~Hundsdorfer.
\newblock Extrapolation-based super-convergent implicit-explicit {P}eer methods
  with {A}-stable implicit part.
\newblock {\em J. Comp. Phys.}, 367:121--133, 2018.

\bibitem{SchneiderLangWeiner}
M.~Schneider, J.~Lang, and R.~Weiner.
\newblock Super-convergent implicit–explicit {P}eer methods with variable
  step sizes.
\newblock {\em J. Comput. Appl. Math.}, doi: 10.1016/j.cam.2019.112501, 2019.

\bibitem{SoleimaniKnothWeiner2017}
B.~Soleimani, O.~Knoth, and R.~Weiner.
\newblock {IMEX} {P}eer methods for fast-wave-slow-wave problems.
\newblock {\em Appl. Numer. Math.}, 118:221--237, 2017.

\bibitem{SoleimaniWeiner2017a}
B.~Soleimani and R.~Weiner.
\newblock A class of implicit {P}eer methods for stiff systems.
\newblock {\em J. Comput. Appl. Math.}, 316:358--368, 2017.

\bibitem{SoleimaniWeiner2018}
B.~Soleimani and R.~Weiner.
\newblock Superconvergent {IMEX} {P}eer methods.
\newblock {\em Appl. Numer. Math.}, 130:70--85, 2018.

\bibitem{PareschiRusso}
D.~Trigiante.
\newblock {\em Recent trends in numerical analysis}.
\newblock Nova Science Publishers, New York, 2000.

\bibitem{WeinerSchmittPodhaiskyJebens2009}
R.~Weiner, B.A. Schmitt, H.~Podhaisky, and S.~Jebens.
\newblock Superconvergent explicit two-step {p}eer methods.
\newblock {\em J. Comput. Appl. Math.}, 223:753--764, 2009.

\bibitem{ZhangSanduBlaise2014}
H.~Zhang, A.~Sandu, and S.~Blaise.
\newblock Partitioned and implicit-explicit general linear methods for ordinary
  differential equations.
\newblock {\em J. Sci. Comput.}, 61(1):119--144, 2014.

\end{thebibliography}

\end{document}